\newtheoremstyle{theorem}
  {10pt}		  
  {10pt}  
  {\sl}  
  {\parindent}     
  {\bf}  
  {. }    
  { }    
  {}     
\theoremstyle{theorem}
\newtheorem{theorem}{Theorem}
\newtheorem{lemma}[theorem]{Lemma}
\newtheoremstyle{defi}
  {10pt}		  
  {10pt}  
  {\rm}  
  {\parindent}     
  {\bf}  
  {. }    
  { }    
  {}     
\theoremstyle{defi}
\newcounter{exampleNo}
\newcommand{\ZZ}{\mathbb{Z}}
\newcommand{\F}{\mathbb{F}}
\newcommand{\KK}{\mathbb{K}}
\newcommand{\NN}{\mathbb{N}}
\newcommand{\bx}{\mathbf{x}}
\newcommand{\lt}{ \mathrm{lt}}
\newcommand{\bu}{\mathbf{u}}
\newcommand{\sbu}{\scriptstyle\mathbf{u}}
\newcommand{\bv}{\mathbf{v}}
\newcommand{\bw}{\mathbf{w}}
\newenvironment{example}[1][Example \arabic{exampleNo}.]{\begin{trivlist}
\item[\hskip \labelsep {\bfseries #1}]\refstepcounter{exampleNo}}{\end{trivlist}}
\begin{document}

\title{A Variant of the Gröbner Basis Algorithm for Computing Hilbert Bases}
\author{\small Natalia Dück$^1$ and Karl-Heinz Zimmermann$^2$\\
\small $^{1,2}$ Hamburg University of Technology\\
\small Hamburg 21073, GERMANY
}
\date{}

\maketitle

\begin{abstract}
Gröbner bases can be used for computing the Hilbert basis of a numerical submonoid.
By using these techniques, we provide an algorithm that calculates a basis of a subspace of a finite-dimensional vector space over a finite prime field given as a matrix kernel.
\end{abstract}

{\bf AMS Subject Classification:} 13P10, 94B05\\
{\bf Key Words and Phrases:} Gröbner basis, integer programming, monoid, Hilbert basis, linear code

\section{Introduction}

Gröbner bases provide a uniform approach to tackling a wide range of problems such as the solvability and solving
algebraic systems of equations, ideal and radial membership decision, and effective computation
in residue class rings modulo polynomial ideals~\cite{adams,becker,cls,sturmfels}.

Furthermore, Gröbner basis techniques are not only a powerful tool for the algorithmic
solution of some fundamental problems in commutative algebra~\cite{buch1}, they also provide means of solving a wide range
of problems in integer programming and invariant theory once these problems have been expressed
in terms of sets of multivariate polynomials~\cite{contitrav-intprogramm, pottier, stu1}.
One such problem is the computation of the Hilbert basis for a submonoid of the numerical monoid $\NN_0^n$.
This problem can be written in terms of polynomials and then be solved using Gröbner basis techniques~\cite{pottier}.
Other elaborations of this method can be found in~\cite{cls-app, stu1}. 

In this paper we will establish an algorithm using Gröbner basis techniques that allows to calculate a basis for a subspace of a finite-dimensional vector space over a finite prime field 
given as a matrix kernel.
This algorithm is based on the one for computing Hilbert bases proposed in~\cite{stu1} and is motivated by the fact that linear codes can be described as such subspaces~\cite{macws, vlint}.

This paper is organized as follows. 
The first section provides an introduction to Gröbner bases, Hilbert bases and their construction for a submonoid of the numerical monoid $\NN_0^n$, and linear codes.
The second section contains the main theorem and a variant of the algorithm for computing a basis for a subspace of $\F_p^n$ described as a matrix kernel, where $p$ is a prime. 
The paper concludes with an example illustrating the algorithm and its application to linear codes.

\section{Preliminaries}
Throughout this paper, $\mathbb{Z}$ denotes the ring of integers, 
$\NN_0$ stands for the set of non-negative integers, 
$\KK$ denotes an arbitrary field, and $\KK[\bx] = \KK[x_1,\ldots,x_n]$ is the commutative polynomial ring in $n$ indeterminates over $\KK$.

\subsection{Gröbner bases}

The \textit{monomials\/} in $\KK[\bx]$ are denoted by $\bx^{\sbu} = x_1^{u_1}x_2^{u_2}\cdots x_n^{u_n}$ and are identified with the lattice 
points $\bu=(u_1,\ldots,u_n)\in\NN_0^n$.
The \textit{degree\/} of a monomial $\bx^{\sbu}$ is the sum $|\bu|=u_1+\cdots+u_n$ and the degree of a polynomial~$f$ is the maximal degree of all
monomials involved in~$f$. 
A \textit{term\/} in $\KK[\bx]$ is a scalar times a monomial.

Denote by $\KK[x_1^{\pm 1},\dots,x_n^{\pm 1}]$
the set of all polynomials given by monomials with exponents in $\mathbb{Z}^n$, which is called the \textit{ring of Laurent polynomials}. 
Negative exponents can be overcome by introducing an additional indeterminate~$t$. 
More precisely, we have
\begin{align}
  \KK[x_1^{\pm 1},\dots,x_n^{\pm 1}]\cong \KK[x_1,\dots,x_n,t]/\left\langle x_1x_2\dots x_nt-1 \right\rangle.\label{eq-laurent}
\end{align}

A \textit{monomial order\/} on $\KK[\bx]$ is a relation $\succ$ on the set of monomials $\bx^{\sbu}$ in $\KK[\bx]$ 
(or equivalently, on the exponent vectors in $\NN_0^n$) satisfying:
(1) $\succ$ is a total ordering, (2) the zero vector $\mathbf{0}$ is the unique minimal element, and
(3) $\bu\succ\bv$ implies $\bu+\bw\succ\bv+\bw$ for all $\bu,\bv,\bw\in\NN_0^n$.
Familiar monomial orders are the lexicographic order, the degree lexicographic order, and the degree reverse lexicographic order.

Given a monomial order $\succ$, each non-zero polynomial $f\in\KK[\bx]$ has a unique \textit{leading term}, denoted by $\lt_\succ(f)$ or simply $\lt(f)$, 
which is given by the largest involved term. 
The coefficient and the monomial of the leading term are called 
the \textit{leading coefficient\/} and the \textit{leading monomial}, respectively.

If $I$ is an ideal in $\KK[\bx]$ and $\succ$ is a monomial order on $\KK[\bx]$, 
its \textit{leading ideal\/} is the monomial ideal generated by the leading monomials of its elements,
\begin{align}
\langle \lt(I)\rangle = \langle\lt(f)\mid f\in I\rangle.
\end{align}
A finite subset $\mathcal{G}$ of an ideal $I$ in $\KK[\bx]$ is a {\em Gröbner basis\/} for $I$ with respect to $\succ$ 
if the leading ideal of $I$ is generated by the set of leading monomials in~$\mathcal{G}$; that is,
\begin{align}
\langle\lt(I)\rangle = \langle\lt(g)\mid g\in \mathcal{G}\rangle.
\end{align}
If no monomial in this generating set is redundant, the Gröbner basis will be called {\em minimal}.
It is called {\em reduced\/} if for any two distinct elements $g,h\in \mathcal{G}$, no term of $h$ is divisible by $\lt(g)$.
A reduced Gröbner basis is uniquely determined provided that the generators are monic.

A Gröbner basis for an ideal $I$ in $\KK[\bx]$ with respect to a monomial order~$\succ$ on $\KK[\bx]$ can be calculated by 
{\em Buchberger's algorithm}.
It starts with an arbitrary generating set for~$I$ and provides in each step new elements of~$I$ yielding eventually a Gröbner basis, which 
can further be transformed into a reduced one. For more about Gröbner basics the reader may consult~\cite{adams, becker, cls}.

\subsection{Monoids, Hilbert bases and their computation using Gröbner bases}

A \textit{monoid\/} is a set $M$ together with a binary operation such that the operation is associative and $M$ possesses an identitiy element.
A \textit{submonoid\/} of a monoid $M$ is a subset of $M$ that is closed under the operation and contains the identity element.
For instance, the set $\NN_0^n$ together with componentwise addition and the zero vector forms a commutative monoid and each submonoid of it is called a {\em numerical monoid}.

A {\em Hilbert basis\/} of a submonoid $K$ of $\NN_0^{n}$ is a minimal (with respect to inclusion) finite subset $\mathcal{H}$ of $K$ such that
each element $k\in K$ can be written as a sum $k=\sum_{h\in\mathcal{H}}c_h h$, where $c_h\in\NN_0$.
It is known that each numerical submonoid has a unique Hilbert basis~\cite{sta-enucombi}.

Submonoids arise in various fields like integer programming. 
Such a problem is usually expressed in \textit{standard form}:
\begin{align}
\mbox{Minimize}\quad\mathbf{c}^T\mathbf{x}\quad\mbox{such that}\quad A\mathbf{x}=\mathbf{b}, \; \mathbf{x}\geq 0,
\end{align}
where $\mathbf{b}\in\mathbb{Z}^{m},\mathbf{c}\in\mathbb{Z}^{n}$ and $A\in\mathbb{Z}^{m\times n}$ are given 
and a non-negative integer vector $\mathbf{x}$ is to be found. 
The set of all integer vectors $\mathbf{x}\geq 0$ satisfying the constraint equation $A\mathbf{x}=\mathbf{b}$ is called the \textit{feasible region}.
Of interest here is the case $\mathbf{b}=\mathbf{0}$ because then the feasible region is the kernel of the matrix $A$, written $\ker(A)$, 
which is clearly a numerical submonoid.
The problem is then to find a Hilbert basis of the submonoid $K=\ker(A)$ in $\NN_0^n$, where $A=\left(a_{ij}\right)$ is an $m\times n$ integer matrix.
  
Following \cite{cls-app} we present an algorithm that solves this problem by using Gröbner bases. 
This procedure can also be found in \cite{pottier, stu1}.

The first step is to translate this problem into the realm of polynomials. 
To this end, we associate a variable $x_i$ to every row of $A$, $1\leq i\leq m$. 
Since entries of $A$ can be negative integers, we have to consider the ring of Laurent polynomials. 
Furthermore, define the mapping
\begin{eqnarray}\label{def-psi1}
 \psi:\KK[v_1,\dots,v_n,w_1,\dots,w_n]\rightarrow \KK[x_1^{\pm 1},\dots,x_m^{\pm 1}][w_1,\dots,w_n]
\end{eqnarray}
on the variables first
\begin{eqnarray} \label{def-psi2}
 \psi(v_j)=w_j\prod_{i=1}^{m}x_i^{a_{ij}}\quad\mbox{and}\quad \psi(w_j)=w_j,\quad 1\leq j\leq n,
\end{eqnarray}
and then extend it such that it becomes a ring homomorphism. 
In view of the ideal 
\begin{eqnarray}\label{eq-idealA}
 I_A=\left\langle w_j\prod_{i=1}^{m}x_i^{a_{ij}}-v_j \left|\right. 1\leq j\leq n\right\rangle
\end{eqnarray}
in $\KK[x_1^{\pm 1},\dots,x_m^{\pm 1}][v_1,\dots,v_n,w_1,\dots,w_n]$, we have by~\cite{robbiano} 
\begin{eqnarray}\label{eq-kerA}
\ker(\psi)=I_A\cap\KK[v_1,\dots,v_n,w_1,\dots,w_n]. 
\end{eqnarray}
Using this notation and the polynomial ring in~(\ref{eq-laurent}) instead of the ring of Laurent polynomials, 
we obtain the following assertion due to \cite{stu1}:
\begin{quote}
Let $\mathcal{G}$ be a Gröbner basis for $I_A$ with respect to any monomial order 
for which $x_i\succ v_j$, $t\succ v_j$ and $v_j\succ w_i$ for all $1\leq i\leq m$ and $1\leq j\leq n$.
A Hilbert basis for $K=\ker (A)$ is then given by
\begin{align}
\mathcal{H}=\left\{\alpha\in\NN_0^n\left|\right.\bv^\alpha-\bw^\alpha\in\mathcal{G}\right\}.\label{eg-hilbertA}
\end{align}
\end{quote}
A proof can be found in \cite{stu1}.

This result facilitates an algorithm for computing the Hilbert basis of a given submonoid $\ker(A)$, which is summarized by Algorithm~\ref{alg-hilbertb}.
\begin{algorithm}
\caption{Gröbner basis algorithm for computing a Hilbert basis.}
\label{alg-hilbertb}
\begin{enumerate}
\item Associate the ideal $I_A$ defined in~(\ref{eq-idealA}) to a given $m\!\times\!n$ integer matrix~$A$.
\item Compute the reduced Gröbner basis $\mathcal{G}$ for $I_A$ with respect to a monomial order with
	$x_i\succ v_j$, $t\succ v_j$ and $v_j\succ w_k$ for all $1\leq i\leq m$ and $1\leq j,k\leq n$.
\item Read off the elements of the shape $\bv^\alpha-\bw^\alpha$, $\alpha\in\NN_0^n$, which form a Hilbert basis for $\ker(A)$.
\end{enumerate}
\end{algorithm}

\subsection{Linear Codes}
Let $\F$ be the finite field. 
A \textit{linear code\/} $\mathcal{C}$ of length $n$ and dimension $k$ over $\F$ is the image of a one-to-one linear mapping 
$\phi:\F^k\rightarrow\F^n$, i.e., $\mathcal{C}=\psi(\F^k)$, where $k\!\leq\!n$.
The code $\mathcal{C}$ is an $[n,k]$ code and its elements are called \textit{codewords}.
In algebraic coding, the codewords are always written as row vectors.

A \textit{generator matrix\/} for an $[n,k]$ code $\mathcal{C}$ is a $k\times n$ matrix $G$ whose rows form a basis of $\mathcal{C}$, 
i.e., $\mathcal{C}=\{\mathbf{a} G\mid \mathbf{a}\in\F^k\}$.
The code $\mathcal{C}$ is in \textit{standard form\/} if it has a generator matrix in reduced echelon form 
$G = \left(I_k\mid M\right)$, where $I_k$ is the $k\times k$ identity matrix.
Each linear code is equivalent (by a monomial transformation) to a linear code in standard form.

For an $[n,k]$ code $\mathcal{C}$ over $\F$, the \textit{dual code} $\mathcal{C}^\perp$ is given by all words $\bu\in\F^n$ such that
$\langle \bu,\mathbf{c}\rangle=0$ for each $\mathbf{c}\in\mathcal{C}$, where $\langle\cdot,\cdot\rangle$ denotes the ordinary inner product.
The dual code $\mathcal{C}^\perp$ is an $[n,n-k]$ code.
If $G = \left(I_k\mid M\right)$ is a generator matrix for $\mathcal{C}$, then $H = \left(-M^T\mid I_{n-k}\right)$ is a generator matrix 
for $\mathcal{C}^\perp$. For each word $\mathbf{c}\in\F^n$, $\mathbf{c}\in \mathcal{C}$ if and only if $\mathbf{c} H^T = \mathbf{0}$.
The matrix $H$ is a \textit{parity check matrix\/} for $\mathcal{C}$~\cite{macws, vlint}.

\section{A Gröbner basis algorithm for finding a Hilbert basis of a matrix kernel}

In the following, let $\F_p$ denote a finite field with $p$ elements, where $p$ is prime.
We are interested in finding the Hilbert basis of the submonoid
\begin{eqnarray}
K=\ker(H_p)\cap\F_p^n, 
\end{eqnarray}
where $H$ is an $m\times n$ integer matrix and $H_p=H\otimes_\ZZ \F_p$.

In other words, we are considering the case in which the numerical monoid $\NN_0^n$ is replaced by the vector space $\F_p^n$ over the finite prime field $\F_p$. 
Then the submonoid $K$ becomes a vector space and the Hilbert basis equals an ordinary basis in the sense of linear algebra.
Clearly, the uniqueness property does no longer hold. 
Nevertheless, the Gröbner basis algorithm for finding a Hilbert basis as described in the previous section (see Algorithm \ref{alg-hilbertb})
can be adapted to this situation in order to find \textit{one} vector space basis. 

Since $p$ is congruent~$0$ in $\F_p$,
the following additional ideal will be used
\begin{align*}
I_p(\mathbf{x})=\left\langle x_i^p-1\left|\right. 1\leq i\leq n\right\rangle. 
\end{align*}
In this way, the exponents of the monomials can be treated as vectors in $\F_p^n$.

Let $H=\left(h_{ij}\right)$ be an $m\times n$-matrix with entries in $\F_p$ and define the ideals
\begin{align}
J_H=\left\langle v_j-w_j\prod_{i=1}^{m}x_i^{h_{ij}}\left|\right. 1\leq j\leq n\right\rangle
\end{align}
and
\begin{align}
I_H=J_H+I_p(\mathbf{x})+I_p(\mathbf{v})+I_p(\mathbf{w}).\label{eq-idealH}
\end{align}

The homomorphism $\psi$ defined in~(\ref{def-psi1}) and~(\ref{def-psi2}) can be used to detect elements in the kernel of $H$.
However, all entries of $H$ can be written (modulo~$p$) as integers  in $\{0,1,\dots,p-1\}$ and so the Laurent polynomials become ordinary polynomials. 
Hence, the image of $\psi$ lies in the polynomial ring $\KK[x_1,\dots,x_m][w_1,\dots,w_n]$.
Note that each non-zero vector $\alpha\in\F_p^n$ can be written as 
\begin{eqnarray}
\alpha=(0,\ldots,0,\alpha_i,\bar{\alpha}), 
\end{eqnarray}
where $\alpha_i\in\F_p\setminus\{0\}$ and $\bar{\alpha}\in\F_p^{n-i}$. 
Furthermore, put
\begin{eqnarray}
\alpha'=\alpha_i\mathbf{e}_i-\alpha= (0,\ldots,0,0,-\bar\alpha), 
\end{eqnarray}
where $\mathbf{e}_i$ is the $i$th unit vector.

\begin{lemma}\label{lem-kernelpsi}
Let $H$ be an $m\times n$-matrix with entries in $\F_p$.
For each non-zero element $\alpha\in\F_p^n$, we have
\begin{align*}
\alpha\in \ker(H)\quad \Longleftrightarrow\quad\psi(v_i^{\alpha_i}\!-\!\bv^{\alpha'}\bw^\alpha)=0~\mod~\left[I_p(\mathbf{x})+I_p(\mathbf{v})\!+\!I_p(\mathbf{w})\right].
\end{align*}

\end{lemma}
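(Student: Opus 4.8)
The plan is to compute $\psi(v_i^{\alpha_i}-\bv^{\alpha'}\bw^\alpha)$ explicitly and to show that, after reduction modulo the three ideals, it vanishes precisely when the defining equations of $\ker(H)$ hold. Since $\psi$ is a ring homomorphism, I would first evaluate it on each of the two monomials. For the first term one gets $\psi(v_i^{\alpha_i})=w_i^{\alpha_i}\prod_{k=1}^m x_k^{\alpha_i h_{ki}}$. For the second, using that $\alpha'$ is supported on the coordinates $j>i$ with representatives $\alpha'_j\equiv-\alpha_j\pmod p$ and that $\psi$ fixes each $w_j$, I would obtain
\[
\psi(\bv^{\alpha'}\bw^\alpha)=w_i^{\alpha_i}\left(\prod_{j>i} w_j^{\alpha'_j+\alpha_j}\right)\prod_{k=1}^m x_k^{\sum_{j>i}\alpha'_j h_{kj}}.
\]

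Next I would carry out the reductions. Observe first that $I_p(\mathbf{v})$ plays no role, since the image of $\psi$ lies in $\KK[x_1,\dots,x_m][w_1,\dots,w_n]$ and hence contains no $v$-variables. Reducing modulo $I_p(\mathbf{w})$, where each $w_j^p\equiv 1$, collapses the factors $w_j^{\alpha'_j+\alpha_j}$ for $j>i$: because $\alpha'_j+\alpha_j\equiv 0\pmod p$, every such factor becomes $1$. This leaves both terms carrying the common factor $w_i^{\alpha_i}$, namely
\[
\psi(v_i^{\alpha_i}-\bv^{\alpha'}\bw^\alpha)\equiv w_i^{\alpha_i}\left(\prod_{k=1}^m x_k^{\alpha_i h_{ki}}-\prod_{k=1}^m x_k^{\sum_{j>i}\alpha'_j h_{kj}}\right)\pmod{I_p(\mathbf{w})}.
\]
Since $w_i^p\equiv 1$ modulo $I_p(\mathbf{w})$, the monomial $w_i^{\alpha_i}$ is a unit in the relevant quotient, so the whole expression vanishes if and only if the bracketed difference of $x$-monomials vanishes modulo $I_p(\mathbf{x})$.

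Then I would invoke the key observation that, modulo $I_p(\mathbf{x})$ (where $x_k^p\equiv 1$), two monomials in the $x_k$ are congruent exactly when their exponent vectors agree componentwise in $\F_p$. Applying this to the bracketed difference yields, for every $k=1,\dots,m$, the congruence $\alpha_i h_{ki}\equiv\sum_{j>i}\alpha'_j h_{kj}\equiv-\sum_{j>i}\alpha_j h_{kj}\pmod p$. Recalling that $\alpha_j=0$ for $j<i$, this is equivalent to $\sum_{j=1}^n\alpha_j h_{kj}\equiv 0\pmod p$ for each row $k$, i.e.\ to $H\alpha=\mathbf{0}$ in $\F_p^m$, which is precisely $\alpha\in\ker(H)$.

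I expect the main point requiring care to be the bookkeeping of exponents modulo $p$: justifying that the $w_j$-factors for $j>i$ cancel, that $w_i^{\alpha_i}$ may be cleared as a unit, and above all that congruence of $x$-monomials modulo $I_p(\mathbf{x})$ amounts to equality of exponent vectors in $\F_p^m$ rather than in $\NN_0^m$. Everything else is a direct computation using that $\psi$ is a homomorphism and that $\alpha'$ was defined so that $\alpha_i\mathbf{e}_i=\alpha+\alpha'$, which is exactly what makes the $i$th coordinate contribute $\alpha_i h_{ki}$ while the tail contributes $-\sum_{j>i}\alpha_j h_{kj}$.
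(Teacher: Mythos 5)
Your proof is correct and takes essentially the same route as the paper's: evaluate $\psi$ on both monomials, cancel the $w_j$-factors modulo $I_p(\mathbf{w})$ using $\alpha+\alpha'=\alpha_i\mathbf{e}_i$, factor out $w_i^{\alpha_i}$, and observe that equality of the remaining $x$-monomials modulo $I_p(\mathbf{x})$ is equivalent to $H\alpha=\mathbf{0}$ in $\F_p^m$. If anything, you are more careful than the paper, which silently clears the factor $w_i^{\alpha_i}$ and leaves implicit both the irrelevance of $I_p(\mathbf{v})$ and the criterion that $x$-monomials are congruent modulo $I_p(\mathbf{x})$ precisely when their exponent vectors agree componentwise in $\F_p$.
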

\begin{proof} 
All computations will be performed modulo $I_p(\mathbf{x})+I_p(\mathbf{v})+I_p(\mathbf{w})$. 
By the definition of $\psi$, we have 
\begin{align*}
 \psi(v_i^{\alpha_i}-\bv^{\alpha'}\bw^\alpha)&=w_i^{\alpha_i}\prod_{k=1}^{m}x_k^{h_{ki}\alpha_i}
-\bw^\alpha\cdot \bw^{\alpha'}\prod_{i=1}^{n}\prod_{k=1}^{m}x_k^{h_{ki}\alpha_i'}\\
&=w_i^{\alpha_i}\left(\prod_{k=1}^{m}x_k^{h_{ki}\alpha_i}-\prod_{i=1}^{n}\prod_{k=1}^{m}x_k^{h_{ki}\alpha_i'}\right)\\
&=w_i^{\alpha_i}\left(\bx^{H\mathbf{e}_i\alpha_i}-\bx^{H\alpha'}\right).
\end{align*}
In the second equation, 
$\bw^{\alpha'}\bw^{\alpha}=\bw^{\alpha'+\alpha}=\bw^{\mathbf{e}_i\alpha_i}=w_i^{\alpha_i}$. 
Thus 
\begin{align*}
 \psi(v_i^{\alpha_i}\!-\!\bv^{\alpha'}\bw^\alpha)=0\quad&\Longleftrightarrow\quad \bx^{H\mathbf{e}_i\alpha_i}-\bx^{H\alpha'}=0\\
				&\Longleftrightarrow\quad H\mathbf{e}_i\alpha_i-H\alpha'=H\left(\mathbf{e}_i\alpha_i-\alpha'\right)=H\alpha=0\\
				&\Longleftrightarrow\quad \alpha\in\ker(H).
\end{align*}
\end{proof}
Note that $\ker(\psi)$ is a toric ideal~\cite{robbiano}, which can be written as 
\begin{align}
 \ker(\psi)=J_H\cap \KK[\bv,\bw].\label{eq-kerneltoric}
\end{align}

Inspired by the assertion on Hilbert bases for numerical submonoids and based on the previous lemma, we obtain the following main result.
\begin{theorem}\label{thm-main}
Let $\mathcal{G}$ be a Gröbner basis for $I_H$ defined as in (\ref{eq-idealH}) with respect to the lexicographical order with 
$x_1\succ\ldots \succ x_m \succ v_1 \succ \ldots \succ v_n \succ w_1 \succ \ldots\succ w_n$.
Then a basis for $\ker(H)$ in $\F_p^n$ is given by
\begin{align*}
\mathcal{H}=\left\{ (0,\ldots,0,\alpha_i,\bar{\alpha})\in\F_p^n\mid 
v_i^{\alpha_i}-\bv^{\alpha'}\bw^\alpha\in\mathcal{G},\;\alpha'=\alpha_i\mathbf{e}_i-\alpha,\;\alpha_i\ne 0\right\}.\label{eq-hbasis}
\end{align*}
\end{theorem}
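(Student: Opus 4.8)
The plan is to turn the statement into the familiar ``read off an echelon basis from a Gröbner basis'' argument, after first upgrading Lemma~\ref{lem-kernelpsi} to a statement about membership in $I_H$. Concretely, I want the equivalence $\alpha\in\ker(H)\iff v_i^{\alpha_i}-\bv^{\alpha'}\bw^\alpha\in I_H$ for every nonzero $\alpha$ with first nonzero coordinate $\alpha_i$ and $\alpha'=\alpha_i\mathbf{e}_i-\alpha$. One direction is easy: since $J_H\subseteq\ker(\psi)$ and $\psi$ carries each generator of $I_p(\mathbf{x})$, $I_p(\mathbf{v})$, $I_p(\mathbf{w})$ into $I_p(\mathbf{x})+I_p(\mathbf{w})$, any $f\in I_H\cap\KK[\bv,\bw]$ satisfies $\psi(f)\equiv 0$ modulo $I_p(\mathbf{x})+I_p(\mathbf{w})$, so Lemma~\ref{lem-kernelpsi} applies. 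For the converse I would pass to the quotient $R=\KK[\bx,\bv,\bw]/(I_p(\mathbf{x})+I_p(\mathbf{v})+I_p(\mathbf{w}))$: the images of the generators of $J_H$ allow one to eliminate every $v_j$, so the map induced by $\psi$ identifies $R/\overline{I_H}$ with $\KK[\bx,\bw]/(I_p(\mathbf{x})+I_p(\mathbf{w}))$; hence $\overline{I_H}=\ker(\overline\psi)$, and because $I_H$ already contains $I_p(\mathbf{x})+I_p(\mathbf{v})+I_p(\mathbf{w})$, membership in $I_H$ is detected exactly by $\overline\psi$. Combined with Lemma~\ref{lem-kernelpsi} this yields the desired equivalence.

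Granting this, $\mathcal{H}\subseteq\ker(H)$ is immediate, since each defining binomial lies in $\mathcal{G}\subseteq I_H$. For linear independence I would compute leading terms: under the prescribed lexicographic order the monomial $v_i^{\alpha_i}$ dominates $\bv^{\alpha'}\bw^\alpha$ (whose $v$-part is supported on indices $>i$), so $\lt(v_i^{\alpha_i}-\bv^{\alpha'}\bw^\alpha)=v_i^{\alpha_i}$. In a reduced Gröbner basis no leading monomial divides another, so two distinct elements of $\mathcal{H}$ cannot both carry a power of the same $v_i$; thus the members of $\mathcal{H}$ have pairwise distinct pivot positions (their first nonzero coordinate), and vectors with distinct first nonzero positions are automatically $\F_p$-independent. (I read $\mathcal{G}$ as the reduced basis produced by Algorithm~\ref{alg-hilbertb}; minimality is what this step needs.)

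The real work is spanning. Given a nonzero $\gamma\in\ker(H)$ with pivot $\gamma_i$, the equivalence puts $v_i^{\gamma_i}-\bv^{\gamma'}\bw^\gamma$ in $I_H$ with leading monomial $v_i^{\gamma_i}$, so some $g\in\mathcal{G}$ has $\lt(g)\mid v_i^{\gamma_i}$, and the heart of the proof is to show that this reducer is exactly a pivot binomial $v_i^{\alpha_i}-\bv^{\alpha'}\bw^\alpha$. Three facts conspire here: $I_H$ is generated by binomials and Buchberger's algorithm preserves this, so $g$ is a binomial; the lex order eliminates $\bx$, so a Gröbner element whose leading monomial is free of $\bx$ lies in $\KK[\bv,\bw]$, giving $g=v_i^{\alpha_i}-\bv^{a}\bw^{b}$; and minimality forces $a_i=0$, since otherwise $v_i^{a_i}$ divides $g$, and as $v_i$ is a unit modulo $I_H$ (because $v_i^p-1\in I_p(\mathbf{v})$) the cofactor would again lie in $I_H$ and produce a proper divisor of $v_i^{\alpha_i}$ in $\langle\lt(I_H)\rangle$. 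Reading $\psi(g)\equiv 0$ then fixes $a=\alpha'$ supported on indices $>i$ and $b=\alpha=\alpha_i\mathbf{e}_i-\alpha'$, so $g$ yields a member $\alpha\in\mathcal{H}$ with pivot $i$. Subtracting $\gamma_i\alpha_i^{-1}\alpha$ from $\gamma$ kills the $i$-th coordinate and stays in $\ker(H)$ with a strictly later pivot, and induction on the pivot position then writes $\gamma$ as an $\F_p$-combination of $\mathcal{H}$. An independent spanning set is a basis, completing the proof.

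I expect the genuine obstacle to be the structural claim that the reducer is a clean pivot binomial with no $v_i$ in its tail: this is the one place where binomiality, the elimination property of the order, and minimality of the basis must all be invoked at once. The identification $\overline{I_H}=\ker(\overline\psi)$ is the secondary delicate point, since one must check that adjoining $I_p(\mathbf{x})$ contributes no spurious elements to $I_H\cap\KK[\bv,\bw]$.
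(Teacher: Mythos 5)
Your proposal is correct, but it takes a genuinely different route from the paper. The paper transplants the Hilbert-basis argument of~\cite{stu1} almost verbatim: it picks a counterexample $\beta$ minimal with respect to the monomial order, performs the explicit reduction $f-v_i^\delta g$, factors the result, and then invokes its facts~2 and~3 (that $J_H$ contains no monomials and that $I_H$ behaves like a prime ideal on $p$-reduced exponents) to conclude that the cofactor binomial lies in $J_H$, after which Lemma~\ref{lem-kernelpsi} converts it back into a kernel element $\beta'-\gamma'+\delta\mathbf{e}_i$ and the decomposition $\beta=(\beta'-\gamma'+\delta\mathbf{e}_i)+\gamma$ yields the contradiction; linear independence is left essentially implicit. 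You instead upgrade Lemma~\ref{lem-kernelpsi} to the membership criterion $f\in I_H \Leftrightarrow \psi(f)\equiv 0 \bmod I_p(\mathbf{x})+I_p(\mathbf{w})$ (for $\psi$ extended by fixing each $x_i$) by identifying $\KK[\bx,\bv,\bw]/I_H$ with $\KK[\bx,\bw]/\left(I_p(\mathbf{x})+I_p(\mathbf{w})\right)$; this is sound, since substituting $v_j\mapsto w_j\prod_{i=1}^{m}x_i^{h_{ij}}$ gives a splitting and sends $v_j^p-1$ to zero modulo $I_p(\mathbf{x})+I_p(\mathbf{w})$, and it removes any need for facts~2 and~3 or the factorization computation. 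You then run an echelon argument: distinct pivots of the reduced-basis binomials give independence, and descent on the pivot position, subtracting the scalar multiple $\gamma_i\alpha_i^{-1}\alpha$, gives spanning. This last step exploits inversion in $\F_p$, which is unavailable in the $\NN_0^n$ setting and is precisely why the paper, following the monoid template, must take the longer reduction route; your version is arguably the more natural one for vector spaces, it makes linear independence explicit (the paper merely announces a ``minimal spanning set'' and proves only spanning), and your unit trick ($v_i^p\equiv 1$ modulo $I_p(\mathbf{v})$, so a common factor $v_i^{a_i}$ can be stripped without leaving $I_H$) genuinely fills a step the paper passes over silently when it asserts that the reducer has the form $g=v_i^{\gamma_i}-\bv^{\gamma'}\bw^{\eta}$ with $\gamma'$ supported strictly beyond position~$i$: the lexicographic order only forbids $v_j$ with $j<i$ in the tail, not $v_i$ itself. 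The caveats you flagged yourself are the right ones: both independence and the tail-cleaning require $\mathcal{G}$ minimal/reduced (as computed in Algorithm~\ref{alg-variant}, and as the paper's own appeal to binomiality of reduced Gröbner bases~\cite{BStu-Binomial} also presupposes), and one should note that $\mathcal{G}$ contains no pure monomials because monomials are units in the group-algebra quotient, so the imported binomiality fact delivers genuine binomials in both proofs.
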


Using this assertion, we can obtain an adapted version of Algorithm~\ref{alg-hilbertb} for computing a basis for $\ker(H)$ as a subspace of $\F_p^n$ (see Algorithm~\ref{alg-variant}).
\begin{algorithm}
\caption{Gröbner basis algorithm for computing a basis for $\ker(H)$.}\label{alg-variant}
\begin{enumerate}
\item Associate the ideal $I_H$ defined as in~(\ref{eq-idealH}) to a given  $m\times n$-matrix $H$ over $\F_p$.
\item Compute the reduced Gröbner basis $\mathcal{G}$ for $I_H$ with respect to the lexicographical order with
$x_1\succ\!\ldots\!\succ \!x_m \succ\! v_1 \succ \!\ldots\!\succ \!v_n \succ\! w_1 \succ \!\ldots\!\succ\! w_n.$
\item Read off the elements of the form $v_i^{\alpha_i}-\bv^{\alpha'}\bw^\alpha$ with $\alpha'=\alpha_i\mathbf{e}_i-\alpha$ and $\alpha_i\ne 0$, which give a basis for $\ker(H)$.

\end{enumerate}
\end{algorithm}
For the proof of correctness, which comes hand in hand with the proof of Theorem \ref{thm-main}, three facts will be required:
\begin{enumerate}
\item The reduced Gröbner basis of a binomial ideal consists of binomials~\cite{BStu-Binomial}.
\item The ideal $J_H$ contains no monomials.
\item The ideal $J_H$ is prime and $I_H$ resembles a prime ideal in the following sense: 
If $f,g\in k[\mathbf{x},\mathbf{v},\mathbf{w}]$ are polynomials such that each variable $x_i$ involved in $fg$ has an exponent of at most $p-1$, 
i.e., the exponents of the monomials are written as elements in $\F_p^n$, 
then $fg\in I_H$ implies either $f\in I_H$ or $g\in I_H$.  
 \end{enumerate}

The following proof is an adapted version of the one in~\cite{stu1}. 
Note that all subsequently performed calculations will be either in $\F_p$ or modulo the ideal $I_p(\mathbf{x})+I_p(\mathbf{v})+I_p(\mathbf{w})$.

\begin{proof}
We need to show that the obtained set $\mathcal{H}$ is a minimal spanning set. Assume that this is not the case. Then
there must be a non-zero element $\beta\in\ker(H)$ that cannot be written as a linear combination of elements in $\mathcal{H}$. 
Choose an element $\beta$ such that the 
monomial $\bx^\beta$ is minimal with respect to the chosen monomial order. 
Write $\beta = (0.\ldots,0,\beta_i,\bar\beta)$, where $\beta_i\ne 0$ and $\bar\beta\in\F_p^{n-i}$.
By Lemma~\ref{lem-kernelpsi},~(\ref{eq-kerneltoric}), and $\ker(\psi)\subset J_H$, we obtain
\begin{align*}
f=v_i^{\beta_i}-\bv^{\beta'}\bw^\beta \in J_H.
\end{align*}
Thus $f$ can be reduced to zero on division by $\mathcal{G}$, since $J_H\subset I_H$. 
Hence by the definition of Gr\"obner bases, there must be a polynomial $g\in\mathcal{G}$ with $\lt(g)=v_i^{\gamma_i}$ and $1\leq\gamma_i\leq\beta_i$. 
Put $\delta=\beta_i-\gamma_i$. 
In view of the chosen elimination order and the fact that $\mathcal{G}$ consists of binomials, 
it follows that $g$ is of the form
\begin{align*}
g=v_i^{\gamma_i}-\bv^{\gamma'}\bw^{\eta},
\end{align*}
for some $\gamma'=(0,\ldots,0,-\bar{\gamma})$, where $\bar{\gamma}\in\F_p^{n-i}$, and $\eta\in\F_p^n$.
But by Lemma~\ref{lem-kernelpsi}, the Gr\"obner basis element $g$ will vanish under $\psi$ and so
$$\eta=\gamma_i\mathbf{e}_i+\gamma'=:\gamma.$$
Then we have
\begin{align*}
 f-v_i^{\delta}\cdot g &= v_i^{\beta_i}-\bv^{\beta'}\bw^\beta-v_i^{\delta+\gamma_i}+v_i^\delta \bv^{\gamma'}\bw^{\gamma}\\
		       &= v_i^\delta \bv^{\gamma'}\bw^{\gamma}-\bv^{\beta'}\bw^{\beta}\\
		       &= \bv^{(0\:\dots\:0\:\delta\:-\bar{\gamma})}\bw^{(0\:\dots\:0\:\gamma_i\:\bar{\gamma})}
			  -\bv^{(0\:\dots\:0\:0\:-\bar{\beta})}\bw^{(0\:\dots\:0\:\beta_i\:\bar{\beta})}\\
		       &=\bv^{(0\:\dots\:0\:0\:-\bar{\gamma})}\bw^{(0\:\dots\:0\:\gamma_i\:\bar{\gamma})}
			    \left(v_i^\delta-\bv^{(0\:\dots\:0\:0\:-\bar{\beta}+\bar{\gamma})}\bw^{(0\:\dots\:0\:\delta\:\bar{\beta}-\bar{\gamma})}\right)\\
		       &=\bv^{\gamma'}\bw^{\gamma}\left(v_i^{\delta}-\bv^{-\beta'+\gamma'}\bw^{\beta'-\gamma'+\delta\mathbf{e}_i}\right).
\end{align*}
Applying the previous stated facts~2 and~3 yields 
\begin{align*}
 v_i^{\delta}-\bv^{-\beta'+\gamma'}\bw^{\beta'-\gamma'+\delta\mathbf{e}_i}\in J_H.
\end{align*}
Thus by Lemma~\ref{lem-kernelpsi}, $\beta'-\gamma'+\delta\mathbf{e}_i\in\ker(H)$. 
But by the choice of $g$, $\delta<\beta_i$ and so $\bx^{\beta'-\gamma'+\delta\mathbf{e}_i}\prec \bx^{\beta}$.
Hence by the selection of $\beta$, $\beta'-\gamma'+\delta\mathbf{e}_i$ can be written as a linear combination of elements in $\mathcal{H}$. 
The same holds for $\gamma$, since it lies in $\mathcal{H}$ due to the choice of the corresponding Gröbner basis element $g$. 
But then
\begin{align*}
 \beta=\beta'+\beta_i\mathbf{e}_i=\beta'+\left(\delta+\gamma_i\right)\mathbf{e}_i+\gamma'-\gamma'
=\left(\beta'-\gamma'+\delta\mathbf{e}_i\right)+\gamma,
\end{align*}
and so $\beta$ can also be written as such a linear combination contradicting the choice of $\beta$ and hence proving the assertion. 
\end{proof}

We conclude by giving an example illustrating applications to linear codes.

\begin{example}
 Consider the $[11,6]$ ternary Golay code~\cite{macws, vlint} with the generator matrix $G=\left(I_6\left|\right.M\right)$, where
\begin{align*}
 M=\begin{pmatrix}
    1&1&1&1&1\\
    0&1&2&2&1\\
    1&0&1&2&2\\
    2&1&0&1&2\\
    2&2&1&0&1\\
    1&2&2&1&0
   \end{pmatrix}.
\end{align*}
Then a parity check matrix is 
\begin{align*}
 H=\left(\begin{array}{ccccccccccc}
    2&0&2&1&1&2&1&0&0&0&0\\
    2&2&0&2&1&1&0&1&0&0&0\\
    2&1&2&0&2&1&0&0&1&0&0\\
    2&1&1&2&0&2&0&0&0&1&0\\
    2&2&1&1&2&0&0&0&0&0&1
   \end{array}\right).
\end{align*}
Applying Algorithm \ref{alg-variant} for computing a basis of $\ker(H)$ yields the following polynomials belonging to the reduced Gr\"obner basis
\begin{align*}
 &v_6-v_7^2v_8v_9v_{10}^2w_6w_7w_8^2w_9^2w_{10},\\
 &v_5-v_7v_8v_9^2v_{11}^2w_5w_7^2w_8^2w_9w_{11},\\
 &v_4-v_7v_8^2v_{10}^2v_{11}w_4w_7^2w_8w_{10}w_{11}^2,\\
 &v_3-v_7^2v_9^2v_{10}v_{11}w_3w_7w_9w_{10}^2w_{11}^2,\\
 &v_2-v_8^2v_9v_{10}v_{11}^2w_2w_8w_9^2w_{10}^2w_{11},\\
 &v_1-v_7^2v_8^2v_9^2v_{10}^2v_{11}^2w_1w_7w_8w_9w_{10}w_{11}.
\end{align*}
The Hilbert basis taken from these polynomials corresponds to the row vectors of the matrix $G$.
\end{example}

\bibliographystyle{plain}
\bibliography{refBook,refPaper}

\end{document}